\theoremstyle{plain}
\newtheorem{theorem}{Theorem}
\newtheorem{corollary}{Corollary}[theorem]
\newtheorem{conj}{Conjecture}
\theoremstyle{definition}
\theoremstyle{remark}
\newtheorem{remark}{Remark}
\begin{document}

\begin{center}\Large
\textbf{On a generalization of one  Kramer’s theorem}
\normalsize

\smallskip
Viachaslau I. Murashka 

 \{mvimath@yandex.ru\}

Faculty of Mathematics and Technologies of Programming,

 Francisk Skorina Gomel State University,  Gomel 246019, Belarus\end{center}

\begin{abstract}
  Yangming~Li and Xianhua~Li in 2012 proposed a conjecture that generalizes O.U.~Kra\-mer's result about supersoluble groups. Here we proved that this conjecture is false in the general case and true for groups with the trivial Frattini subgroup.
\end{abstract}

 \textbf{Keywords.} Finite group;   Fitting subgroup;  generalized Fitting subgroup; supersoluble group; saturated formation.

\textbf{AMS}(2010). 20F16  (Primary)  20F17,   20D25 (Secondary).

\section*{Main results}

  All groups considered here will be finite. According to B. Huppert's Theorem a group is supersoluble if and only if all its maximal subgroups have prime indexes. Recall that the Fitting subgroup $\mathrm{F}(G)$ of a group $G$ is the largest normal nilpotent subgroup of $G$.    From O.U.~Kramer's result \cite{f5}  it follows that a soluble group is supersoluble if and only if all its maximal subgroups that do not contain the Fitting subgroup have prime indexes. In the universe of all groups the  Fitting subgroup does not have many properties which it has in the soluble universe. Recall \cite[X, Theorem 13.13]{19}  that \emph{the generalized Fitting subgroup} $\mathrm{F}^*(G)$ can be defined by
 $ \mathrm{F}^*(G)/\mathrm{F}(G)=\mathrm{Soc}(\mathrm{F}(G)C_G(\mathrm{F}(G))/\mathrm{F}(G)).$
P. Schmid \cite{f3} and L.A.~Shemetkov
 \cite[Definition~7.5]{f4} considered another generalization $\tilde{\mathrm{F}}(G)$ of the Fitting subgroup defined by     $ \Phi(G)\subseteq \tilde{\mathrm{F}}(G)$ and
  $\tilde{\mathrm{F}}(G)/\Phi(G)=\mathrm{Soc}(G/\Phi(G))$. They proved that $C_G(\tilde{\mathrm{F}}(G))\subseteq \tilde{\mathrm{F}}(G)$ in every group $G$. 
  Yangming~Li and Xianhua~Li \cite{30} extended O.U.~Kramer's result to the universe of all groups. They proved: a   group $G$ is supersoluble if and only if all its maximal subgroups that do not contain $\tilde{\mathrm{F}}(G)$ have prime indexes. Also they showed that  the previous result is false if we replace $\tilde{\mathrm{F}}(G)$ by $\mathrm{F}^*(G)$. Another generalization of O.U.~Kramer's result was proposed earlier by Yanming  Wang et.\,al. \cite{Wang}. Their result states that a   group $G$ is supersoluble if and only if all its maximal subgroups that do not contain $\mathrm{F}(H)$ have prime indexes where $H$ is a normal soluble subgroup of $G$ and $G/H$ is supersoluble. Recall that the formation of all supersoluble groups is denoted by $\mathfrak{U}$.
  Yangming Li and Xianhua Li proposed the following conjecture.
  
  \begin{conj}[{\cite[Conjecture 1]{30}}]\label{conj1}
  Let $\mathfrak{F}$ be a saturated formation containing $\mathfrak{U}$ and suppose
that $H$ is a normal subgroup of $G$ such that $G/H\in \mathfrak{F}$. If, for any maximal
subgroup $M$ of $G$, there holds that $|\mathrm{\tilde F}(H) : \mathrm{\tilde F}(H) \cap M| = 1$ or a prime, then $G\in \mathfrak{F}$.
  \end{conj}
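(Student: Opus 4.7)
The plan is to attempt a standard minimal-counterexample argument along the lines of the original Li--Li proof (the case $H=G$, $\mathfrak{F}=\mathfrak{U}$), pushing as far as possible before isolating where the extra parameters $H$ and $\mathfrak{F}$ cause trouble. Let $G$ be a counterexample of minimal order. First I would dispose of the trivial cases: if $H=1$ then $G=G/H\in\mathfrak{F}$, so $H\neq 1$; and if $\tilde{\mathrm{F}}(H)=1$ there is nothing to intersect with maximal subgroups, which should already force $H=1$. Then I would verify that the hypothesis is inherited by suitable quotients: for a minimal normal subgroup $N$ of $G$ contained in $H$ (in fact in $\tilde{\mathrm{F}}(H)$), check that $G/N$, $H/N$ still satisfy the hypotheses of the conjecture, using that $\tilde{\mathrm{F}}(H)N/N\subseteq \tilde{\mathrm{F}}(H/N)$ and that prime-index maximal subgroups are preserved under quotients. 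By minimality $G/N\in\mathfrak{F}$.

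Next I would exploit the fact that $\mathfrak{F}$ is saturated. Since every proper quotient of $G$ lies in $\mathfrak{F}$, the group $G$ has a unique minimal normal subgroup, call it $N$, and moreover $N\not\subseteq\Phi(G)$ (otherwise $G/N\in\mathfrak{F}$ together with saturation would give $G\in\mathfrak{F}$). Therefore $N$ has a complement $M$ in $G$, with $M$ maximal and $|G:M|=|N|$. I would then argue that $N\subseteq\tilde{\mathrm{F}}(H)$: here one would like to use the Schmid--Shemetkov property $C_H(\tilde{\mathrm{F}}(H))\subseteq \tilde{\mathrm{F}}(H)$ to place every minimal normal subgroup of $G$ lying in $H$ inside $\tilde{\mathrm{F}}(H)$. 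Granted this, $M$ cannot contain $\tilde{\mathrm{F}}(H)$ (else it would contain $N$), so by hypothesis $|\tilde{\mathrm{F}}(H):\tilde{\mathrm{F}}(H)\cap M|=|N|$ is $1$ or a prime. The first is excluded, so $|N|$ is a prime, $N$ is cyclic of prime order, hence in $\mathfrak{U}\subseteq\mathfrak{F}$; combined with $G/N\in\mathfrak{F}$ and saturation this yields $G\in\mathfrak{F}$, contradicting the choice of $G$.

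The main obstacle I expect is the step that places every relevant minimal normal subgroup of $G$ inside $\tilde{\mathrm{F}}(H)$. The centralizer property $C_H(\tilde{\mathrm{F}}(H))\subseteq\tilde{\mathrm{F}}(H)$ only controls centralizers \emph{inside} $H$, not inside $G$; a minimal normal subgroup $N\le H$ of $G$ could be centralized by $\tilde{\mathrm{F}}(H)$ yet fail to lie in $\tilde{\mathrm{F}}(H)$ if the interaction between $\Phi(H)$, $\Phi(G)$ and the socle of $G/\Phi(G)$ is pathological. A secondary obstacle lies in the descent of hypotheses to $G/N$: the inclusion $\tilde{\mathrm{F}}(H)N/N\subseteq \tilde{\mathrm{F}}(H/N)$ is not obviously an equality, so a maximal subgroup of $G/N$ that cuts $\tilde{\mathrm{F}}(H/N)$ nontrivially might correspond to a maximal subgroup of $G$ whose intersection with $\tilde{\mathrm{F}}(H)$ has composite index.

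If either obstacle proves insurmountable, the natural diagnostic would be to look for a group $G$ with $\Phi(G)\ne 1$ in which $\tilde{\mathrm{F}}(H)$ sits inside $\Phi(G)$: then the prime-index hypothesis is vacuously satisfied for maximal subgroups of $G$, while $G$ itself need not belong to $\mathfrak{F}$. Conversely, if one imposes $\Phi(G)=1$, the Frattini-related pathologies above disappear, $\tilde{\mathrm{F}}(H)\cap\Phi(G)=1$, and the argument sketched in the first two paragraphs should close without further difficulty, yielding a proof of the conjecture in that restricted setting.
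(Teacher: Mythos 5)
The statement you are trying to prove is false, and the paper's treatment of it is a disproof, so no completion of your argument can succeed; but your closing diagnostic is exactly the right one and is precisely what the paper exploits. The counterexample takes $K\simeq A_5$, $V$ the permutation $\mathbb{F}_5K$-module, $W=V/\mathrm{Soc}(V)$ (an indecomposable $4$-dimensional module whose radical is a faithful simple $3$-dimensional submodule with trivial quotient), and sets $G=W\rtimes K$, $H=\mathrm{Rad}(W)K$. Then $G/H\simeq Z_5\in\mathfrak{U}\subseteq\mathfrak{F}$, and since $\mathrm{Rad}(W)$ is the unique minimal normal subgroup of $H$ and $\Phi(H)=1$, one gets $\mathrm{\tilde F}(H)=\mathrm{Rad}(W)\leq\Phi(G)$ by \cite[B, Lemma 3.14]{s8}. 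Hence $|\mathrm{\tilde F}(H):\mathrm{\tilde F}(H)\cap M|=1$ for every maximal subgroup $M$ of $G$, the hypothesis is vacuously satisfied, and yet $G$ is not even soluble. This is exactly the configuration you predicted: $\mathrm{\tilde F}(H)$ buried inside $\Phi(G)$.

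Both obstacles you flagged are genuine and are simultaneously realized in this example. The minimal normal subgroup $N=\mathrm{Rad}(W)$ of $G$ lies inside $\Phi(G)$, so your step ``$N\not\subseteq\Phi(G)$'' fails; it relied on $G/N\in\mathfrak{F}$, which in turn relied on the descent of the hypothesis to $G/N$, and that descent fails here because $\mathrm{\tilde F}(H)N/N=1$ while $\mathrm{\tilde F}(H/N)\simeq A_5$, so the inclusion $\mathrm{\tilde F}(H)N/N\subseteq\mathrm{\tilde F}(H/N)$ is very far from an equality and the prime-index condition is not inherited (indeed $G/N\simeq Z_5\times A_5\notin\mathfrak{F}$ for soluble $\mathfrak{F}$). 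Your proposed repair is essentially the paper's Theorem~\ref{thm1}: under the hypothesis $\Phi(G)\cap H\leq\Phi(H)$ (slightly weaker than $\Phi(G)=1$) the conclusion does hold. Be aware, though, that the positive result is more work than your sketch suggests: after reducing to $\Phi(H)=1$ one must still rule out a non-abelian $\mathrm{\tilde F}(H)=\mathrm{Soc}(H)$ (via a Frattini argument on a Sylow subgroup for the largest prime of a component and the embedding of an index-$q$ quotient into the symmetric group of degree $q$), then decompose $\mathrm{\tilde F}(H)$ into minimal normal subgroups of prime order using the Krull--Remak--Schmidt theorem, and finish with an $\mathfrak{F}$-hypercentre argument rather than with a single complemented minimal normal subgroup.
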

  
  In this paper we give the negative answer on this conjecture.
  
  \begin{theorem}
    Conjecture \ref{conj1} is false for any saturated formation $\mathfrak{F}$ of soluble groups containing $\mathfrak{U}$. In particular, it is false for $\mathfrak{U}$.
  \end{theorem}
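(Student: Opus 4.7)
The plan is to exhibit a non-soluble group $G$ with a normal subgroup $H\triangleleft G$ such that $G/H\in\mathfrak U$ and the hypothesis of Conjecture 1 holds. Since every formation $\mathfrak F$ in the statement consists of soluble groups, non-solubility of $G$ automatically gives $G\notin\mathfrak F$, so a single pair $(G,H)$ refutes the conjecture uniformly for every admissible $\mathfrak F$.

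The hypothesis is equivalent to the assertion that every maximal subgroup $M$ of $G$ with $\tilde{\mathrm F}(H)\not\subseteq M$ has prime index. A clean way to enforce it is to arrange $\tilde{\mathrm F}(H)\subseteq\Phi(G)$, which makes the hypothesis vacuous. This forces $\Phi(G)\ne 1$, consistent with the paper's companion result that the conjecture holds when $\Phi(G)=1$. Concretely I would take $S=A_5$, let $V$ be the natural $4$-dimensional faithful irreducible $\mathbb F_2 S$-module coming from the isomorphism $A_5\cong\mathrm{PSL}(2,4)$, and construct $G$ as a non-split extension
\[
1 \to V \to G \to S\times C_2 \to 1
\]
classified by a class $\alpha\in H^2(S\times C_2,V)$ satisfying (i) $\alpha|_S=0$ and (ii) $\alpha\ne 0$. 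Setting $H$ equal to the preimage of $S\times 1$, condition (i) implies that $H\cong V\rtimes S$ is split, so $\Phi(H)=1$ and $\tilde{\mathrm F}(H)=\operatorname{Soc}(H)=V$. Condition (ii) excludes any complement of $V$ in $G$; and since $V$ is $(S\times C_2)$-irreducible, any maximal $M$ of $G$ with $V\not\subseteq M$ would yield $M\cap V=0$ and hence such a complement. Therefore every maximal subgroup of $G$ contains $V=\tilde{\mathrm F}(H)$, so the hypothesis of Conjecture 1 holds trivially, while $G\supseteq S$ is non-soluble and $G/H\cong C_2\in\mathfrak U$.

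The main obstacle is the cohomological existence of $\alpha$ satisfying (i) and (ii) simultaneously. The Hochschild--Serre spectral sequence for $1\to S\to S\times C_2\to C_2\to 1$ identifies the kernel of the restriction $H^2(S\times C_2,V)\to H^2(S,V)$ with the contributions $E_\infty^{1,1}\oplus E_\infty^{2,0}$; here $E_\infty^{2,0}\subseteq H^2(C_2,V^S)$ vanishes because $V$ is non-trivial irreducible (so $V^S=0$), and $E_\infty^{1,1}$ equals $H^1(C_2,H^1(S,V))$, which is non-zero precisely when $H^1(S,V)\ne 0$. The latter is the classical non-vanishing of $H^1$ of the natural module for $\mathrm{PSL}(2,4)$ in defining characteristic, so the required class $\alpha$ exists and the construction goes through, yielding the desired counterexample to Conjecture 1.
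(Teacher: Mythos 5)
Your construction is correct, and it is genuinely different from the paper's. Both counterexamples implement the same underlying idea --- force $\tilde{\mathrm F}(H)\leq \Phi(G)$ so that $|\tilde{\mathrm F}(H):\tilde{\mathrm F}(H)\cap M|=1$ for every maximal subgroup $M$, while $G$ remains non-soluble --- but they realize it by different mechanisms. The paper works in characteristic $5$: it takes $W$ to be the quotient of the permutation $\mathbb{F}_5A_5$-module by its socle, forms the \emph{split} extension $G=W\rtimes A_5$, and gets $\mathrm{Rad}(W)\leq\Phi(G)$ for free from the standard lemma \cite[B, Lemma 3.14]{s8}, with $H=\mathrm{Rad}(W)A_5$ and $\tilde{\mathrm F}(H)=\mathrm{Rad}(W)$; everything is elementary, explicit, and machine-checkable. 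You work in characteristic $2$ and obtain $V\leq\Phi(G)$ by making $G$ a \emph{non-split} extension of $A_5\times C_2$ by the natural $\mathrm{SL}_2(4)$-module, arranging splitness over $A_5$ by choosing the class in $\ker\bigl(\mathrm{res}\colon H^2(A_5\times C_2,V)\to H^2(A_5,V)\bigr)$. Your verification of the group-theoretic consequences ($\Phi(H)=1$, $\tilde{\mathrm F}(H)=\mathrm{Soc}(H)=V$, every maximal subgroup not containing $V$ would be a complement) is sound, and the cohomological input is right: $E_2^{2,0}=H^2(C_2,V^{A_5})=0$, $E_\infty^{1,1}=H^1(C_2,H^1(A_5,V))$ survives (the only relevant differential lands in $H^3(C_2,V^{A_5})=0$), and $H^1(A_5,V)\neq 0$ for the natural $\mathrm{SL}_2(4)$-module is indeed classical (one can also see it from the projective cover of the trivial $\mathbb{F}_4A_5$-module, or replace the spectral sequence by a K\"unneth cross-product argument). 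What each approach buys: the paper's needs no cohomology beyond a textbook Frattini lemma and produces a group of order $5^4\cdot 60$; yours needs a nontrivial fact about $H^1$ but produces a smaller group (order $2^4\cdot 120=1920$) and shows the conjecture can also fail via non-split extensions rather than via radicals of permutation modules. One small imprecision worth noting: your opening paraphrase of the hypothesis (``every maximal $M$ with $\tilde{\mathrm F}(H)\not\subseteq M$ has prime index'') is not literally equivalent to ``$|\tilde{\mathrm F}(H):\tilde{\mathrm F}(H)\cap M|$ is $1$ or a prime'' unless $G=M\tilde{\mathrm F}(H)$, but this is immaterial since your construction makes the condition vacuous.
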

  
  \begin{proof}
    Let $K\simeq A_5$ be the alternating group of degree 5 and  $V$ be the permutation $\mathbb{F}_5K$-module. Then the dimension of $\mathrm{Soc}(V)$ is 1. Let $W\simeq V/\mathrm{Soc}(V)$. Note that $W$ is an indecomposable module,  $\mathrm{dim}(W)=4$, $\mathrm{Rad}(W)$ is a faithful simple module, $\mathrm{dim}(\mathrm{Rad}(W))=3$, and $W/\mathrm{Rad}(W)$ is a trivial module.
    Now let $G=W\rtimes K$. Hence $\mathrm{Rad}(W)\leq \Phi(G)$ by \cite[B, Lemma 3.14]{s8}. Note that $H=\mathrm{Rad}(W)K\trianglelefteq G$ and $G/H\simeq Z_5\in\mathfrak{U}\subseteq\mathfrak{F}$.   Since $\mathrm{Rad}(W)$ is a faithful simple module, we see that $\mathrm{Rad}(W)$ is the unique minimal normal subgroup of $H$ and $\Phi(H)=1$. Hence $\mathrm{\tilde F}(H)=\mathrm{Rad}(W)\leq \Phi(G)$. Now $|\mathrm{\tilde F}(H):\mathrm{\tilde F}(H)\cap M|=1$ for every maximal subgroup $M$ of $G$.
    So $G$ satisfies the statement of Conjecture~\ref{conj1}. Since $G$ is not soluble, $G\not\in\mathfrak{F}$. Hence Conjecture~\ref{conj1} is false for any saturated
    formation $\mathfrak{F}$ of soluble groups containing $\mathfrak{U}$.
  \end{proof}

\begin{remark}

The module from the previous theorem can be constructed in GAP \cite{GAP} by the following commands:

  $V:=PermutationGModule(AlternatingGroup(5), GaloisField(5));$

  $L:=MTX.BasisSocle(V);$

  $W:=MTX.InducedActionFactorModule(V, L);$

 One can check that this module is indecomposable and find all its composition factors with the following commands:

  $MTX.IsIndecomposable(W);$

  $MTX.CompositionFactors(W);$
\end{remark}

Our counterexample is based on $\Phi(G)\neq 1$. Nevertheless if $\Phi(G)\cap H\leq \Phi(H)$, then the statement of the conjecture is true.

\begin{theorem}\label{thm1}
  Let $\mathfrak{F}$ be a saturated formation containing $\mathfrak{U}$ and suppose
that $H$ is a normal subgroup of $G$ such that $G/H\in \mathfrak{F}$ and $\Phi(G)\cap H\leq \Phi(H)$. If, for any maximal
subgroup $M$ of $G$, there holds that $|\mathrm{\tilde F}(H) : \mathrm{\tilde F}(H) \cap M| = 1$ or a prime, then $G\in \mathfrak{F}$.
  \end{theorem}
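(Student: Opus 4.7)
The plan is to argue by minimal counterexample. Let $G$ be a group of smallest order satisfying all the hypotheses of the theorem but with $G \notin \mathfrak{F}$. The first reduction shows $\Phi(G) \cap H = 1$. Indeed, if $N$ is a minimal normal subgroup of $G$ with $N \leq \Phi(G) \cap H$, then by hypothesis $N \leq \Phi(H)$, and I would verify that $G/N$ inherits every assumption: $(G/N)/(H/N) \cong G/H \in \mathfrak{F}$; the inclusion $N \leq \Phi(H)$ gives $\Phi(H/N) = \Phi(H)/N$ and the crucial identity $\tilde{F}(H/N) = \tilde{F}(H)/N$; maximal subgroups of $G/N$ correspond to those of $G$ containing $N$, transferring the Kramer-type index condition; and $\Phi(G/N) \cap H/N \leq \Phi(H/N)$ passes down. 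Minimality then gives $G/N \in \mathfrak{F}$, and since $\mathfrak{F}$ is saturated with $N \leq \Phi(G)$, we obtain $G \in \mathfrak{F}$, a contradiction.

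Next, for any minimal normal subgroup $N$ of $G$ with $N \leq \tilde{F}(H)$, we have $N \leq H$ and $N \cap \Phi(G) = 1$, so $N$ is complemented by some maximal subgroup $M$ of $G$. Since $\tilde{F}(H) \not\leq M$, the Kramer hypothesis yields $[G:M] = [\tilde{F}(H) : \tilde{F}(H) \cap M] = [N : N \cap M] = p$, a prime. If $N$ is abelian, then $N \cap M$ is normalized by $N$ (abelian) and by $M$ (as $N \trianglelefteq G$), so $N \cap M \trianglelefteq G = NM$; minimality of $N$ forces $N \cap M = 1$, hence $|N| = p$ and $N$ is cyclic of prime order.

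The main obstacle is ruling out non-abelian $N$. Write $N = T_1 \times \cdots \times T_k$ with isomorphic non-abelian simple factors. The plan is to exhibit another maximal subgroup $M^*$ of $G$ with $N \not\leq M^*$ but $[G:M^*]$ non-prime, contradicting the Kramer hypothesis. Since $G/C_G(N)$ contains $\mathrm{Inn}(T_1)$, it is not supersoluble; by the contrapositive of Huppert's theorem it has a maximal subgroup $\bar M$ of non-prime index. Exploiting that $\mathrm{Out}(T_1)$ is solvable (Schreier) and the embedding $G/C_G(N) \leq \mathrm{Aut}(T_1) \wr S_k$, one can choose $\bar M$ not containing $\mathrm{Inn}(N)$. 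The preimage $M^*$ in $G$ then satisfies $C_G(N) \leq M^*$, $N \not\leq M^*$, and $[G:M^*] = [G/C_G(N) : \bar M]$ is non-prime, giving the desired contradiction.

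Therefore every minimal normal subgroup of $G$ in $\tilde{F}(H)$ is cyclic of prime order. Applying the same argument to each $G$-chief factor in a chief series refining $1 \trianglelefteq \tilde{F}(H)$, every $G$-chief factor of $\tilde{F}(H)$ is cyclic of prime order and hence $\mathfrak{F}$-central since $\mathfrak{F} \supseteq \mathfrak{U}$. Combining this chief-factor information with the Schmid-Shemetkov inclusion $C_H(\tilde{F}(H)) \leq \tilde{F}(H)$ and standard saturated-formation machinery (relating the $\mathfrak{F}$-hypercenter to the action of $G$ on $\tilde{F}(H)$), I would conclude every $G$-chief factor inside $H$ is $\mathfrak{F}$-central. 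Together with $G/H \in \mathfrak{F}$, this makes every $G$-chief factor $\mathfrak{F}$-central, so $G \in \mathfrak{F}$ by the local characterization of saturated formations—the final contradiction.
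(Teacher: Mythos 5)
Your skeleton (minimal counterexample, Frattini-quotient reduction, supplementing maximal subgroups, hypercenter finish) matches the paper's, but two of your steps are asserted rather than proved, and both are exactly where the real work lies. First, the non-abelian case. You need a maximal subgroup of non-prime index not containing $N$, and you propose to extract it from $G/C_G(N)$ via Huppert's theorem plus Schreier; but Huppert only guarantees \emph{some} maximal subgroup of non-prime index, and nothing you say forces it to avoid $\mathrm{Inn}(N)$ --- note also that $X/\mathrm{Inn}(N)$ embeds in $\mathrm{Out}(S)\wr S_k$, which is \emph{not} soluble for $k\geq 5$, so the Schreier appeal does not control the maximal subgroups above $\mathrm{Inn}(N)$. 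The paper's argument here is genuinely different and is the missing idea: take $p$ the largest prime divisor of $|S|$, $P$ a Sylow $p$-subgroup of $N$, and a maximal $M\supseteq N_G(P)$; the Frattini argument gives $G=MN=M\tilde{\mathrm{F}}(H)$, so $|N:N\cap M|$ is a prime $q\neq p$ (since $P\leq M$), and then $N/\mathrm{Core}_N(N\cap M)$ embeds in $S_q$ while being a product of copies of $S$, contradicting $p>q$. You would need this (or a comparably concrete) argument.

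Second, your jump from ``every minimal normal subgroup of $G$ inside $\tilde{\mathrm{F}}(H)$ has prime order'' to ``every $G$-chief factor of $\tilde{\mathrm{F}}(H)$ has prime order'' by ``applying the same argument to each chief factor'' does not go through. Your argument lives at the bottom of the chief series: it uses $\Phi(G)\cap H=1$ to produce a maximal subgroup supplementing a \emph{minimal} normal subgroup. For a higher chief factor $K/L$ you would need a maximal subgroup of $G$ containing $L$ but not $K$ (i.e.\ $K/L\not\leq\Phi(G/L)$), and the hypothesis of the theorem does not pass to $G/L$ for an arbitrary normal $L\leq\tilde{\mathrm{F}}(H)$ (in particular $\tilde{\mathrm{F}}(H/L)\neq\tilde{\mathrm{F}}(H)/L$ in general). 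Nor is $\mathrm{Soc}(H)$ automatically completely reducible as a $G$-group, so prime-order minimal normal subgroups alone do not place $\tilde{\mathrm{F}}(H)$ in $\mathrm{Z}_{\mathfrak{U}}(G)$. The paper closes this gap with a Krull--Remak--Schmidt decomposition of $\tilde{\mathrm{F}}(H)$ into directly $G$-indecomposable normal factors $T$, showing each $T$ equals a prime-order minimal normal subgroup (using that $\tilde{\mathrm{F}}(H)\cap M\trianglelefteq G$ splits $T$ as $N\times(\tilde{\mathrm{F}}(H)\cap M\cap T)$); only then does $\tilde{\mathrm{F}}(H)\leq\mathrm{Z}_{\mathfrak{U}}(G)$ and $G/C_G(\tilde{\mathrm{F}}(H))$ abelian follow, after which your final paragraph (via $C_H(\tilde{\mathrm{F}}(H))=\tilde{\mathrm{F}}(H)$ and saturation) is sound.
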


\begin{proof}
Assume that the statement of Theorem~\ref{thm1} is false and let a group $G$ be a minimal order counterexample.

$(a)$ $\Phi(H)=1$. 

Suppose that $\Phi(H)\neq 1$. 
Since $H\trianglelefteq G$, wee see that $\Phi(H)\leq\Phi(G)$. Note that $G/H\simeq (G/\Phi(H))/(H/\Phi(H))\in\mathfrak{F}$, $\mathrm{\tilde F}(H/\Phi(H))=\mathrm{\tilde F}(H)/\Phi(H)$ and $M$ is a maximal subgroup of $G$ iff $M/\Phi(H)$ is a maximal subgroup of $G/\Phi(H)$. Now the hypothesis of Theorem~\ref{thm1} holds for $G/\Phi(H)$. By our  assumption $G/\Phi(H)\in\mathfrak{F}$. Since $\mathfrak{F}$ is a saturated formation and $\Phi(H)\leq\Phi(G)$, we see that $G\in\mathfrak{F}$, a contradiction.

$(b)$ \emph{$\mathrm{\tilde F}(H)$ is abelian}. 

Assume that $\mathrm{\tilde F}(H)=\mathrm{Soc}(H)$ is non-abelian.  Then there is a minimal normal non-abelian subgroup $N$ of $G$ with $N\leq \mathrm{\tilde F}(H)$.  Note that there is a simple group $S$ such that    $N\simeq N_1\times\dots\times N_k$ and $N_i\simeq S$.  Let $p$ be the largest prime divisor of $|S|$ and $P$ be a Sylow $p$-subgroup of $N$. It is clear that $N_G(P)\neq G$. Let $M$ be a maximal subgroup of $G$ with $N_G(P)\leq M$.     Then $G=N_G(P)N=MN=M\mathrm{\tilde F}(H)$ by Frattini argument.
Now $|G|=\,|M||N:M\cap N|=|M||\mathrm{\tilde F}(H) : \mathrm{\tilde F}(H) \cap M|  $. By our assumption $|N:M\cap N|$ is a  prime $q$. Since $P\leq N_N(P)\leq M\cap N$, we see that $p\neq q$.
According to \cite[Theorem 1.1]{Isaac} $N/\mathrm{Core}_N(N\cap M)$ is  isomorphic to a subgroup of  the symmetric group of degree $q$. Note that $N/\mathrm{Core}_N(N\cap M)$  is isomorphic to a direct product of positive number of isomorphic to $S$ groups. Hence every prime divisor of $|S|$ is not greater than $q$, a contradiction with $p>q$.

$(c)$ \emph{$\mathrm{\tilde F}(H)$ is a direct product of minimal normal subgroups of $G$ and their orders are primes}.

According to The Krull-Remark-Shmidt Theorem $\mathrm{\tilde F}(H)$ admits a direct decomposition into directly $G$-indecomposable normal subgroups of $G$. Let $T$ be one of them and $N$ be a minimal normal subgroup of $G$  below $T$. From $\Phi(G)\cap H\leq\Phi(H)=1$, we see that  there is a maximal subgroup $M$ of $G$ with $G=MN=M\mathrm{\tilde F}(H)$. Since $N$ is abelian, we see that $M\cap N\trianglelefteq G$. Hence $M\cap N=1$. Now $|G|=|M||N|=|M||\mathrm{\tilde F}(H) : \mathrm{\tilde F}(H) \cap M|$. It means that $|N|=$ $|\mathrm{\tilde F}(H) : \mathrm{\tilde F}(H) \cap M|$ is a prime by our assumption.
Since $\mathrm{\tilde F}(H) \cap M$ is abelian and  $G=M\mathrm{\tilde F}(H)$, we see that  
$\mathrm{\tilde F}(H) \cap M \trianglelefteq G$. Now $T=\mathrm{\tilde F}(H) 
\cap T=N\times(\mathrm{\tilde F}(H) \cap M )\cap T=N\times(\mathrm{\tilde F}(H) \cap M \cap T)$  
and $(\mathrm{\tilde F}(H) \cap M \cap T)\trianglelefteq G$. Since $T$ is  directly $G$-indecomposable, we see that $T=N$ is a minimal normal subgroup of $G$ of a prime order.  

\newpage

$(d)$ $G/\mathrm{\tilde F}(H)\in\mathfrak{F}$.

According to $(c)$  $\mathrm{\tilde F}(H)=N_1\times\dots \times N_m$ where $N_i$ is a minimal normal subgroup of $G$ of prime order. Hence $G/C_G(N)$ is isomorphic to a subgroup of the automorphism group of $N$. In particular,  $G/C_G(N)$ is abelian. From $C_G(\mathrm{\tilde F}(H))=\cap_{i=1}^mC_G(N_i)$ it follows that $G/C_G(\mathrm{\tilde F}(H))$ is abelian. 
Since $\mathrm{\tilde F}(H)$ is abelian, we see that $\mathrm{\tilde F}(H)\leq C_H(\mathrm{\tilde F}(H))\leq \mathrm{\tilde F}(H)$. So $\mathrm{\tilde F}(H)=C_H(\mathrm{\tilde F}(H))$.
Since $\mathfrak{F}$ is a formation containing all supersoluble groups and $G/H\in\mathfrak{F}$, we see that $G/(H\cap C_G(\mathrm{\tilde F}(H)))=G/C_H(\mathrm{\tilde F}(H))=G/\mathrm{\tilde F}(H)\in\mathfrak{F}$.

$(e)$ \emph{The final contradiction.}

 From $(c)$ it follows that $\mathrm{\tilde F}(H)$ lies in the supersoluble hypercenter $\mathrm{Z}_\mathfrak{U}(G)$ of $G$. Since $\mathfrak{U}\subseteq \mathfrak{F}$, we see that  $\mathrm{\tilde F}(H)$ lies in the $\mathfrak{F}$-hypercenter of $G$. From $(d)$ it follows that $G=\mathrm{Z}_\mathfrak{F}(G)$.
 Since $\mathfrak{F}$ is saturated, $G\in\mathfrak{F}$, the final contradiction.
\end{proof}

\begin{corollary}[{\cite[Theorem 1.1]{30}}]\label{Cor1}
  Let $G$ be a   group. Then $G$ is supersolvable
if and only if, for any maximal subgroup $M$ of $G$, there holds that $|\mathrm{\tilde F}(G):\mathrm{\tilde F}(G)\cap M| = 1$ or a prime.
\end{corollary}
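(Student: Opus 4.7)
The plan is to derive Corollary~\ref{Cor1} as a direct consequence of Theorem~\ref{thm1} together with B.~Huppert's classical characterization of supersoluble groups stated at the beginning of the paper.

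For the forward direction, I would assume $G$ is supersoluble. By Huppert's Theorem every maximal subgroup $M$ of $G$ has prime index. Since $\tilde F(G)M/M \simeq \tilde F(G)/(\tilde F(G)\cap M)$ and $\tilde F(G)M/M$ is a subgroup of $G/M$ of prime order, its order divides $|G:M|$. Hence $|\tilde F(G):\tilde F(G)\cap M|$ is either $1$ or the same prime as $|G:M|$.

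For the backward direction, I would apply Theorem~\ref{thm1} with $\mathfrak{F}=\mathfrak{U}$ and $H=G$. The hypothesis $G/H\in\mathfrak{U}$ becomes trivial since $G/G$ is the trivial group, and $\Phi(G)\cap H\leq\Phi(H)$ reduces to $\Phi(G)\leq\Phi(G)$, which holds. The remaining hypothesis is exactly the assumption of the corollary on the indices $|\tilde F(G):\tilde F(G)\cap M|$. Therefore Theorem~\ref{thm1} gives $G\in\mathfrak{U}$, i.e.\ $G$ is supersoluble.

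There is essentially no obstacle: the corollary is set up as precisely the special case $H=G$ of Theorem~\ref{thm1}, and the only nontrivial verification is that the Frattini condition $\Phi(G)\cap H\leq\Phi(H)$ is automatic when $H=G$. The forward direction is a standard index computation using Huppert's Theorem.
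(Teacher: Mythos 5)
Your proposal is correct and matches the paper, whose entire proof is the one-liner ``Directly follows from Theorem~\ref{thm1} for $H=G$''; your backward direction is exactly that specialization, and your forward direction just makes explicit the routine index computation ($|\mathrm{\tilde F}(G):\mathrm{\tilde F}(G)\cap M|=|\mathrm{\tilde F}(G)M:M|$ divides $|G:M|$) that the paper leaves implicit. The only nitpick is that you phrase this via ``a subgroup of $G/M$'' although $M$ need not be normal; the argument should be stated purely in terms of indices, but the conclusion is unaffected.
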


\begin{proof} Directly follows from Theorem \ref{thm1} for $H=G$.
\end{proof}

\begin{corollary}[Kramer \cite{f5}]\label{kr}
   A soluble group $G$ is supersoluble if and only if   $\mathrm{F}(G)\leq M$ or $M\cap\mathrm{F}(G)$ is a maximal subgroup of $\mathrm{F}(G)$ for every maximal subgroup $M$ of $G$. \end{corollary}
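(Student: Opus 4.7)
The plan is to derive this corollary directly from Corollary~\ref{Cor1} by reducing it to the soluble case, where the generalized Fitting subgroup $\tilde{\mathrm{F}}(G)$ coincides with the ordinary Fitting subgroup $\mathrm{F}(G)$. Since the statement concerns a soluble group, it should suffice to translate the ``index $1$ or prime'' condition in Corollary~\ref{Cor1} into the ``contains $\mathrm{F}(G)$ or intersects it in a maximal subgroup'' condition appearing here.

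First I would establish the identity $\tilde{\mathrm{F}}(G) = \mathrm{F}(G)$ for a soluble group $G$. This is a classical observation: in a soluble group every chief factor is abelian, so $\mathrm{Soc}(G/\Phi(G))$ is a product of elementary abelian minimal normal subgroups and hence nilpotent; on the other hand, $\mathrm{F}(G)/\Phi(G)$ equals $\mathrm{Soc}(G/\Phi(G))$ because in the soluble group $G/\Phi(G)$ (with trivial Frattini subgroup) the Fitting subgroup is a direct product of minimal normal subgroups and self-centralizes. Together with the definition $\tilde{\mathrm{F}}(G)/\Phi(G) = \mathrm{Soc}(G/\Phi(G))$, this gives $\tilde{\mathrm{F}}(G) = \mathrm{F}(G)$.

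Second, I would translate the two numerical alternatives. The condition $|\tilde{\mathrm{F}}(G):\tilde{\mathrm{F}}(G)\cap M| = 1$ is simply $\mathrm{F}(G)\leq M$. The condition that this index is a prime is equivalent, for the nilpotent group $\mathrm{F}(G)$, to $M\cap\mathrm{F}(G)$ being a maximal subgroup of $\mathrm{F}(G)$: in any group a subgroup of prime index is maximal, and in a nilpotent group every maximal subgroup has prime index. With these equivalences, Corollary~\ref{Cor1} applied to the soluble group $G$ yields exactly the statement of Corollary~\ref{kr}.

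There is no real obstacle; the only point requiring care is the identity $\tilde{\mathrm{F}}(G)=\mathrm{F}(G)$ for soluble groups, which is standard and could alternatively be cited from the literature on the Schmid--Shemetkov subgroup. The rest is a routine translation of indices into subgroup-maximality via the nilpotency of $\mathrm{F}(G)$.
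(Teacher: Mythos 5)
Your proposal is correct and follows essentially the same route as the paper: both reduce the statement to Corollary~\ref{Cor1} via the identity $\tilde{\mathrm{F}}(G)=\mathrm{F}(G)$ for soluble $G$ and use the nilpotency of $\mathrm{F}(G)$ to identify ``maximal in $\mathrm{F}(G)$'' with ``prime index''. The only difference is that you spell out the proof of $\tilde{\mathrm{F}}(G)=\mathrm{F}(G)$ and both directions of the equivalence, which the paper leaves implicit.
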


\begin{proof}
  Since $\mathrm{F}(G)$ is nilpotent, we see that if   $M\cap\mathrm{F}(G)$ is a maximal subgroup of $\mathrm{F}(G)$, then $|\mathrm{F}(G):M\cap\mathrm{F}(G)|\in\mathbb{P}$. Note that $\mathrm{F}(G)=\mathrm{\tilde F}(G)$. Now  $G\in\mathfrak{U}$ by Corollary \ref{Cor1}.
\end{proof}

\begin{corollary}[{\cite[Theorem 3.1]{Wang}}]
 Let $\mathfrak{F}$  be a saturated  formation containing $\mathfrak{U}$, $G$ be a group  with a  solvable normal subgroup  $H$  such  that  $G/H \in\mathfrak{F}$.  If for  any maximal  subgroup $M$ of  $G$, either  $\mathrm{F}(H) \leq   M$  or $\mathrm{F}(H) \cap M$ is a maximal subgroup of $\mathrm{F}(H)$,  then  $G \in  \mathfrak{F}$. The  converse also holds, in the  case where $\mathfrak{F} = \mathfrak{U}$.
\end{corollary}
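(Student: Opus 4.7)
The plan is to deduce this from Theorem~\ref{thm1}. Two elementary observations translate Wang's hypothesis into the hypothesis of Theorem~\ref{thm1}: since $H$ is solvable, $\tilde{\mathrm{F}}(H)=\mathrm{F}(H)$ (because $H/\Phi(H)$ is solvable with trivial Frattini, so $\mathrm{Soc}(H/\Phi(H))=\mathrm{F}(H/\Phi(H))=\mathrm{F}(H)/\Phi(H)$); and since $\mathrm{F}(H)$ is nilpotent, the condition ``$\mathrm{F}(H)\cap M$ is maximal in $\mathrm{F}(H)$'' is equivalent to ``$|\mathrm{F}(H):\mathrm{F}(H)\cap M|$ is a prime''. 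Together these reduce Wang's hypothesis to the prime-index hypothesis of Theorem~\ref{thm1} applied to $\tilde{\mathrm{F}}(H)$.

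The remaining condition of Theorem~\ref{thm1}, namely $\Phi(G)\cap H\leq\Phi(H)$, I would establish by induction on $|G|$ combined with saturation. If $\Phi(H)\neq1$, pass to $G/\Phi(H)$: using $\Phi(H)\leq\Phi(G)$, the identity $\mathrm{F}(H/\Phi(H))=\mathrm{F}(H)/\Phi(H)$, and saturation of $\mathfrak{F}$, the hypothesis descends and the conclusion lifts. Hence we may assume $\Phi(H)=1$, in which case $C:=\Phi(G)\cap H$ is nilpotent normal in $H$, so $C\leq\mathrm{F}(H)$. If $C=1$, Theorem~\ref{thm1} applies directly. Otherwise $C\leq\Phi(G)$ and saturation reduces the problem to $G/C$.

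The main obstacle is verifying that Wang's hypothesis survives the passage to $G/C$---equivalently, that one can control $\mathrm{F}(H/C)$ in terms of $\mathrm{F}(H)/C$, which is not automatic even for solvable $H$. A cleaner workaround is to mimic steps $(c)$--$(e)$ of the proof of Theorem~\ref{thm1} directly on $G$: show that every $G$-chief factor of $\mathrm{F}(H)$ lying above $C$ has prime order (a non-Frattini argument combined with Wang's hypothesis exhibits a complementing maximal subgroup $M$ with $G=M\mathrm{F}(H)$ and forces the index $|\mathrm{F}(H):\mathrm{F}(H)\cap M|$ to be prime), and absorb the Frattini chief factors inside $C$ via saturation and the $\mathfrak{F}$-hypercenter.

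For the converse, when $\mathfrak{F}=\mathfrak{U}$: take $H=G$, which is solvable since $G\in\mathfrak{U}$; then $G/H=1\in\mathfrak{U}$, and Kramer's theorem (Corollary~\ref{kr}) yields the stated maximal-intersection condition on $\mathrm{F}(G)=\mathrm{F}(H)$.
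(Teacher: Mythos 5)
Your reduction strategy and the two translation observations ($\tilde{\mathrm{F}}(H)=\mathrm{F}(H)$ for solvable $H$; maximality of $\mathrm{F}(H)\cap M$ in the nilpotent group $\mathrm{F}(H)$ is the same as prime index) are correct and are exactly what is needed. The genuine gap is the one you flag yourself: after reducing to $\Phi(H)=1$ you must pass to $G/C$ with $C=\Phi(G)\cap H$, and you do not establish that $\mathrm{F}(H/C)=\mathrm{F}(H)/C$, without which neither Wang's hypothesis nor the identification of $\tilde{\mathrm{F}}(H/C)$ descends. The ``cleaner workaround'' you offer in its place --- re-running steps $(c)$--$(e)$ of Theorem~\ref{thm1} directly on $G$ --- is not a proof: it would amount to reproving the theorem under different hypotheses, and you give no argument for how the chief factors of $\mathrm{F}(H)$ inside $C$ are actually ``absorbed''. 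So as written the argument is incomplete at its decisive step.

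The missing ingredient is a standard Gasch\"utz-type fact, and supplying it makes your route work: a normal subgroup $D$ of $G$ with $D\Phi(G)/\Phi(G)$ nilpotent is itself nilpotent (see \cite[A, Theorem 9.3]{s8}). Applying this to $D$ defined by $D/C=\mathrm{F}(H/C)$ (note $D\trianglelefteq G$ and $C\leq D\cap\Phi(G)$, so $D\Phi(G)/\Phi(G)$ is an image of the nilpotent group $D/C$) gives $D\leq\mathrm{F}(H)$, hence $\mathrm{F}(H/C)=\mathrm{F}(H)/C$; since also $\Phi(G/C)\cap(H/C)=(\Phi(G)\cap H)/C=1$ and $C\leq\Phi(G)\leq M$ for every maximal $M$, Theorem~\ref{thm1} applies to $G/C$ and saturation finishes. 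The paper avoids your two-stage reduction altogether: it passes to $G/\Phi(G)$ in a single step, replaces $H$ by $H\Phi(G)/\Phi(G)$, uses \cite[A, Theorem 9.3(c)]{s8} to identify $\mathrm{F}(H\Phi(G)/\Phi(G))$ with $\mathrm{F}(H)\Phi(G)/\Phi(G)$, and observes that the extra hypothesis of Theorem~\ref{thm1} is vacuous because $\Phi(G/\Phi(G))=1$; that is the cleaner version of your idea. (A minor further point: your converse only treats $H=G$, whereas the statement concerns an arbitrary solvable normal $H$; for that one notes that in a supersoluble $G$ every maximal subgroup has prime index, so $|\mathrm{F}(H):\mathrm{F}(H)\cap M|$ divides a prime.)
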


\begin{proof}
  Note that $(G/\Phi(G))/(H\Phi(G)/\Phi(G))\simeq G/H\Phi(G)\in\mathfrak{F}$. Let $K/\Phi(G)=\mathrm{F}(H\Phi(G)/\Phi(G))$. It means that $K=\mathrm{F}(H\Phi(G))$ by \cite[A, Theorem 9.3(c)]{s8}. So
  $K=\mathrm{F}(H\Phi(G))\cap H\Phi(G)=\Phi(G)(K\cap H)=\Phi(G)\mathrm{F}(H)$. Let $M/\Phi(G)$ be a maximal subgroup of $G/\Phi(G)$. Then $M$ is a maximal subgroup of $G$.   Note that $K/\Phi(G)\simeq \mathrm{F}(H)/(\mathrm{F}(H)\cap \Phi(G))$ and $K/\Phi(G)\cap M/\Phi(G)\simeq  (\mathrm{F}(H)\cap M)/(\mathrm{F}(H)\cap\Phi(G))$. It means that $|\mathrm{F}(H\Phi(G)/\Phi(G)):\mathrm{F}(H\Phi(G)/\Phi(G))\cap M/\Phi(G)|\in\mathbb{P}\cup\{1\}$.  So $G/\Phi(G)\in\mathfrak{F}$ by Theorem \ref{thm1}. Since $\mathfrak{F}$ is saturated, $G\in\mathfrak{F}$.
\end{proof}

\subsection*{Acknowledgments}

I am grateful to A.F. Vasil'ev for helpful discussions on this paper.

\vspace{-2mm}

\end{document}